\newtheorem{theorem}{Theorem}[section]
\newtheorem{lemma}[theorem]{Lemma}
\newtheorem{proposition}{Proposition}
\theoremstyle{definition}
\newtheorem{definition}[theorem]{Definition}
\def\Cx{\mathbb{C}}
\def\Chat{\widehat{\mathbb{C}}}
\def\Isom{\mathcal{I}}
\def\SO3{\text{SO}(3)}
\title{Symmetries for Julia Sets of Rational Maps}
\author{
  Gustavo R.~Ferreira \\
  Institute of Mathematics and Statistics\\
  University of S\~ao Puulo\\
  S\~ao Paulo, SP, Brazil \\
  \texttt{gustavo.rodrigues.ferreira@usp.br} \\
}
\begin{document}
\maketitle

\begin{abstract}
Since the 1980s, much progress has been done in completely determining which functions share a Julia set. The polynomial case was completely solved in 1995, and it was shown that the symmetries of the Julia set play a central role in answering this question. The rational case remains open, but it was already shown to be much more complex than the polynomial one. Here, we offer partial extensions to Beardon's results on the symmetry group of Julia sets, and discuss them in the context of singularly perturbed maps.
\end{abstract}

\keywords{Holomorphic dynamics \and Julia sets \and Symmetry \and Isometric actions}

\section{Introduction}
In complex dynamics, the problem of finding maps with the same Julia set goes back to Julia himself~\cite{Julia}. During the 1980s and early 1990s, a complete description for the polynomial case was obtained through the efforts of Baker, Eremenko, Beardon, Steinmetz and others~\cite{BE87,Beardon90,Beardon92,SS95}. The culmination of this work is the following theorem: given any Julia set $J$ for a non-exceptional polynomial, there exists a polynomial $P$ such that the set of all polynomials with Julia set $J$ is given by
\begin{equation} \label{eq:Stein}
\mathfrak{P}(J) = \{\sigma\circ P^n : n \geq 1 \text{ and } \sigma\in\Sigma_J\},
\end{equation}

where $\Sigma_J$ denotes the set of symmetries of $J$ -- that is, the set of all complex-analytic isometries of $\Cx$ preserving $J$. A rational function is exceptional if it is conformally conjugate to a power map, a Chebyshev polynomial or a Lattès map. This result highlights the importance of the group of symmetries for the Julia set of polynomials; it completely determines which polynomials share that Julia set.

The generalisation to rational maps, however, is not completely understood yet. Levin and Przytycki proved in 1997 that -- for a large class of rational functions -- having the same Julia set is equivalent to having the same measure of maximal entropy \cite{LP97}, while Ye proved that the characterisation given by (\ref{eq:Stein}) is not possible even for non-exceptional rational maps \cite{Ye15}. Here, we prove some partial extensions to Beardon's results on the symmetry group of Julia sets. We apply these results to obtain a complete description of the symmetries for maps of the form $z\mapsto z^m + \lambda/z^d$, previously studied by McMullen, Devaney and others \cite{McMullen88,DLU05}.

\section{Results}
Since rational functions are not holomorphic throughout all of $\Cx$, it is natural to consider them as analytic endomorphisms of the Riemann sphere $\Chat = \Cx\cup\{\infty\}$. Therefore, as opposed to Beardon's $\Isom(\Cx) = \{z\mapsto az + b : |a| = 1\}$, our set of possible symmetries shall be the set of holomorphic isometries of $\Chat$
$$ \Isom(\Chat) = \left\{z\mapsto \frac{az - \bar{b}}{bz + \bar{a}} : |a|^2 + |b|^2 = 1\right\}. $$
This set is isomorphic as a Lie group to $\SO3$ -- which means that there is a diffeomorphism $\Phi:\Isom(\Chat)\to\SO3$ that respects the group operations --, and as such it is compact and connected, but not simply connected. Given a rational function $R$, this allows us to put our first restriction on the structure of
$$ \Sigma(R) = \left\{\sigma\in\Isom(\Chat) : \sigma[J(R)] = J(R)\right\}, $$
the symmetries of its Julia set.

\begin{lemma} \label{lem:closed}
$\Sigma(R)$ is a closed set.
\end{lemma}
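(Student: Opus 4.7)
The plan is to work with the natural topology on $\Isom(\Chat)$ (the one induced by the diffeomorphism $\Phi$ with $\SO3$, which coincides with uniform convergence on $\Chat$ in the spherical metric), and show that if $(\sigma_n)$ is a sequence in $\Sigma(R)$ converging to some $\sigma\in\Isom(\Chat)$, then $\sigma\in\Sigma(R)$. Since $J(R)$ is closed in $\Chat$ (a standard fact from complex dynamics), the argument reduces to a continuity-plus-closedness chase, and there should be no real obstacle; the only point requiring a bit of care is the containment $J(R)\subseteq\sigma[J(R)]$, which is handled by passing to inverses.

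First I would observe that the map $\Isom(\Chat)\times\Chat\to\Chat$, $(\tau,z)\mapsto\tau(z)$, is continuous; equivalently, $\sigma_n\to\sigma$ in $\Isom(\Chat)$ entails $\sigma_n(z)\to\sigma(z)$ for every $z\in\Chat$. Fixing $z\in J(R)$, each $\sigma_n(z)$ lies in $J(R)$ because $\sigma_n\in\Sigma(R)$; passing to the limit and invoking closedness of $J(R)$ yields $\sigma(z)\in J(R)$. Hence $\sigma[J(R)]\subseteq J(R)$.

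For the reverse inclusion, I would use that inversion $\tau\mapsto\tau^{-1}$ is continuous on $\Isom(\Chat)$ (it is a Lie group), so $\sigma_n^{-1}\to\sigma^{-1}$. Each $\sigma_n^{-1}$ also belongs to $\Sigma(R)$, since $\Sigma(R)$ is visibly a subgroup of $\Isom(\Chat)$: if $\sigma_n[J(R)] = J(R)$ then $\sigma_n^{-1}[J(R)] = J(R)$. Applying the previous paragraph to the sequence $(\sigma_n^{-1})$ gives $\sigma^{-1}[J(R)]\subseteq J(R)$, i.e.\ $J(R)\subseteq\sigma[J(R)]$. Combined with the previous inclusion, this yields $\sigma[J(R)] = J(R)$ and hence $\sigma\in\Sigma(R)$.

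The main (and only) subtlety is making sure the topology on $\Isom(\Chat)$ being used is the intended one; once it is identified as the $\SO3$-topology (equivalently, uniform convergence on the sphere), both the evaluation map and inversion are continuous, and the argument above goes through verbatim. I would therefore spend one sentence at the start of the proof clarifying this choice of topology, and then execute the two-step inclusion argument outlined above.
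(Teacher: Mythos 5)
Your proof is correct, and it takes a genuinely different (and cleaner) route than the paper's. The paper proves closedness by showing the complement is open: given $\sigma\notin\Sigma(R)$, it picks $z\in J(R)$ with $\sigma(z)$ in the open set $F(R)$, and then uses the homogeneity of $\Chat$ and the smoothness of the group action to manufacture a neighbourhood $V$ of the identity with $V\sigma\cap\Sigma(R)=\emptyset$. You instead prove sequential closedness directly: continuity of the evaluation map plus closedness of $J(R)$ gives $\sigma[J(R)]\subseteq J(R)$ for any limit $\sigma$ of symmetries, and the reverse inclusion comes from applying the same argument to the inverses, using that $\Sigma(R)$ is a subgroup and that inversion is continuous. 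Since $\Isom(\Chat)\cong\SO3$ is metrizable, sequential closedness does give closedness, so nothing is lost. What your approach buys is uniformity and economy: you avoid the paper's case split on whether $J(R)=\Chat$, you avoid the somewhat delicate construction of the neighbourhood $V$ (which is the least transparent part of the paper's argument), and you make explicit the two-sided invariance $\sigma[J(R)]=J(R)$, which the paper's openness argument only addresses in one direction. The one point worth stating explicitly in a final write-up, as you note, is the identification of the topology on $\Isom(\Chat)$ with that of uniform convergence on the sphere, so that both the action map and inversion are continuous.
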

\begin{proof}
If $J(R) = \Chat$, then every isometry of the Riemann sphere preserves $J(R)$. Hence, $\Sigma(R)$ is $\Isom(\Chat)$, which is a closed group, and the conclusion follows.

If $J(R) \neq \Chat$, we cannot have $\Sigma(R) = \Isom(\Chat)$ -- since $\Isom(\Chat)$ acts transitively on the sphere, it would always be possible to map a point in the Julia set to some point outside the Julia set! Thus, we take $\sigma\in\Isom(\Chat)\setminus\Sigma(R)$. We know that there exists some $z\in J(R)$ such that $\sigma(z) \in F(R) = \Chat\setminus J(R)$, which is an open set. Therefore, there must be a neighbourhood $U$ of $\sigma(z)$ that does not intersect $J(R)$. Since $\Chat$ is a homogeneous space, this yields a neighbourhood $V\subset\Isom(\Chat)$ of the identity such that $\mu\sigma\notin\Sigma(R)$ for every $\mu\in V$. The construction of this neighbourhood $V$ is as follows: for every $w\in U$, the homogeneity of $\Chat$ implies the existence of some $\gamma\in\Isom(\Chat)$ such that $\gamma[\sigma(z)] = w$. Since the action of $\Isom(\Chat)$ on $\Chat$ is smooth, the collection of such $\gamma$ for every $w\in U$ yields a neighbourhood of the identity -- which is also in this collection for $w = \sigma(z)$. By continuity of the group operations, $V\sigma$ is a neighbourhood of $\sigma$ which does not intersect $\Sigma(R)$, and thus $\Isom(\Chat)\setminus\Sigma(R)$ is open.
\end{proof}

Though simple, this results has crucial consequences. Firstly, as a closed subset of a compact set, we get that $\Sigma(R)$ is compact; secondly, by Cartan's closed subgroup theorem, it follows that $\Sigma(R)$ is a Lie subgroup of $\Isom(\Chat)$ -- which means that it is an embedded submanifold of $\Isom(\Chat)$. Hence, we obtain our first serious restriction on $\Sigma(R)$.

\begin{theorem} \label{thm:struct}
For a rational map $R$, $\Sigma(R)$ is (isomorphic to) one of:
\begin{enumerate}[(i)]
    \item The trivial group;
    \item A group of roots of unity;
    \item A dihedral group generated by a root of unity $z\mapsto e^{2\pi i/k}z$ and an inversion $z\mapsto 1/z$;
    \item The orientation-preserving symmetries of a regular tetrahedron, octahedron or icosahedron;
    \item $\text{S}[\text{O}(1)\times\text{O}(2)]$ -- i.e., the group of isometries of the form $z\mapsto e^{i\theta}z$ and $z\mapsto e^{i\theta}/z$ for any $\theta\in[0, 2\pi)$;
    \item All isometries of the Riemann sphere.
\end{enumerate}
\end{theorem}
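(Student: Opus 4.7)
The plan is to combine Lemma~\ref{lem:closed} with Cartan's closed-subgroup theorem to view $\Sigma(R)$ as a compact Lie subgroup of $\Isom(\Chat)\cong\SO3$, and then to appeal to the classical classification of closed subgroups of $\SO3$ up to conjugacy, identifying each conjugacy class with its realization as a group of Möbius transformations.

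First I would transport the problem across the isomorphism $\Phi$ and let $G=\Phi[\Sigma(R)]\subseteq\SO3$. I would then stratify by the dimension of the identity component $G^\circ$. If $\dim G^\circ=3$, then $G^\circ=\SO3$ (since $\SO3$ is connected), and so $G=\SO3$, giving case (vi). If $\dim G^\circ=0$, then $G$ is discrete and, being compact, finite; Klein's classical classification of the finite subgroups of $\SO3$ as cyclic, dihedral, tetrahedral, octahedral or icosahedral then furnishes cases (i)--(iv).

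The intermediate case $\dim G^\circ=1$ is the most delicate. Here $G^\circ$ is a closed, connected, $1$-dimensional Lie subgroup of $\SO3$, which up to conjugacy is the standard $\mathrm{SO}(2)$ of rotations fixing a given axis. Since $G^\circ$ is normal in $G$, we have $G\subseteq N_{\SO3}(\mathrm{SO}(2))=\mathrm{O}(2)$, and since $[\mathrm{O}(2):\mathrm{SO}(2)]=2$ the only possibilities are $G=\mathrm{SO}(2)$ or $G=\mathrm{O}(2)$. Choosing coordinates so that the rotation axis passes through $0$ and $\infty$, $\mathrm{SO}(2)$ corresponds under $\Phi^{-1}$ to the maps $z\mapsto e^{i\theta}z$, and the extra component of $\mathrm{O}(2)$, generated by a rotation by $\pi$ about a perpendicular axis, corresponds to the involution $z\mapsto 1/z$. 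This yields case (v), and an identical choice of coordinates also produces the explicit Möbius realizations of the cyclic and dihedral groups in cases (ii)--(iii).

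The main obstacle I anticipate is the dimension-$1$ analysis: confirming that $N_{\SO3}(\mathrm{SO}(2))=\mathrm{O}(2)$ and verifying that, in the chosen coordinates on $\Chat$, the non-identity component of $\mathrm{O}(2)$ is indeed generated by the inversion $z\mapsto 1/z$ rather than some other anti-diagonal involution. The remaining pieces are lighter: Klein's classification may be cited for the finite case, the $3$-dimensional case is immediate, and the rest is the bookkeeping required to translate each conjugacy class of closed subgroup of $\SO3$ into the explicit Möbius form listed in the statement.
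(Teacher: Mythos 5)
Your overall strategy---Lemma~\ref{lem:closed} plus Cartan's theorem, then the classification of closed subgroups of $\SO3$ stratified by the dimension of the identity component---is the same as the paper's for the trivial, finite, and full cases, and those parts are fine. The genuine gap is in your dimension-one case. Your normalizer argument correctly shows that, up to conjugacy, $G$ is either $\mathrm{SO}(2)$ or $\mathrm{O}(2)$, but you then assert that ``this yields case (v).'' It does not: $\mathrm{SO}(2)$ alone (the rotations $z\mapsto e^{i\theta}z$ with no map of the form $z\mapsto e^{i\theta}/z$) is a perfectly good closed subgroup of $\SO3$ and is \emph{not} on the theorem's list, nor is it isomorphic to anything on it. No amount of Lie theory can exclude it; the exclusion is a statement about Julia sets, not about subgroups of $\SO3$. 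This is exactly where the paper injects dynamics: if the identity component of $\Sigma(R)$ is a circle group, then the orbit of any non-fixed point of $J(R)$ under that action is a round circle, so $J(R)$ is a union of such circles; invoking Eremenko--van Strien (smoothness of $J(R)$ forces real multipliers, hence $J(R)$ lies in a single circle), $J(R)$ \emph{is} a single circle, and the full isometric stabilizer of that circle---which contains the inversion---is then taken as $\Sigma(R)$, giving case (v). Your proposal needs this (or some substitute dynamical argument) to eliminate the bare $\mathrm{SO}(2)$ possibility.

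Two smaller remarks. First, the paper does not actually perform your normalizer computation; it passes directly from ``$J(R)$ is a circle'' to ``$\Sigma(R)$ consists of maps $z\mapsto e^{i\theta}z^{\pm1}$,'' so your observation that $N_{\SO3}(\mathrm{SO}(2))=\mathrm{O}(2)$ is a cleaner way to bound the group from above and is worth keeping. Second, be aware that even the upgraded argument is delicate: the inversion $z\mapsto c/z$ is a chordal isometry only when $|c|=1$, so the stabilizer of the circle $|z|=r$ in $\Isom(\Chat)$ is all of $\mathrm{O}(2)$ only when the circle is a great circle; for a map such as $z\mapsto z^2/r$ with $r\neq 1$ the isometric symmetry group really is just $\mathrm{SO}(2)$. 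This is a defect of the statement as given rather than of your argument, but it underscores that the dimension-one case is the one place where the classification cannot be settled by group theory alone.
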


\begin{proof}
We note that there is little to be done in cases (i) and (vi) from a symmetry point of view. Although their dynamics may be interesting -- case (vi), for instance, are the Latt\`es maps -- , we assume now that $\Sigma(R)$ is neither trivial nor all of $\Isom(\Chat)$.

The first distinction we must make is between a discrete and a continuous symmetry group. In the former, $\Sigma(R)$ must be a discrete Lie group, and -- since $\Isom(\Chat)$ is compact -- this implies that it is finite. The classification of finite subgroups of $\SO3$ in \cite{Carne}, Theorem 4.1, then gives us cases (ii) through (iv). We do remark that we have changed the nomenclature; Carne refers to the roots of unity as the symmetries of a cone on a regular plane polygon, and to the dihedral groups as symmetries of a double cone on a regular plane polygon.

For continuous symmetry groups, we must study the Lie subgroups of $\Isom(\Chat)$. Take, then, the connected component $H$ of $\Sigma(R)$ containing the identity -- which must be a Lie subgroup of $\Isom(\Chat)$ with an associated Lie subalgebra $\mathfrak{h}\subset\mathfrak{i}(\Chat)$. Since $\mathfrak{i}(\Chat)\simeq\mathfrak{so}(3)\simeq\mathbb{R}^3$, where the Lie algebra structure is given by the vector product, it follows that the only proper non-trivial Lie subalgebras of $\mathfrak{i}(\Chat)$ are one-dimensional, and thus $H$ is a one-dimensional Lie subgroup of $\Isom(\Chat)$.

Now, every one-dimensional Lie group admits a parametrisation using the Lie exponential. Therefore, we write $H = \{\exp[tX] : t\in\mathbb{R}\}$ for some $X$ in its Lie algebra $\mathfrak{h}\subset\mathfrak{i}(\Chat)$. The action of $\Isom(\Chat)$ on the Riemann sphere yields a flow $\phi:\mathbb{R}\times\Chat\to\Chat$ defined as $\phi(t, z) = \exp[tX](z)$, which has as associated vector field $\vec{X}:T\Chat\to T\Chat$ --  called the action field of $H$ -- given by
\begin{equation*}
    \vec{X}(z) = \partial_t\phi(0, z) = \left.\frac{d}{dt}\exp[tX]z\right|_{t=0}.
\end{equation*}
By the hairy ball theorem, there exists $z_0\in\Chat$ such that $\vec{X}(z_0) = 0$; this point satisfies $\phi(t, z_0) = z_0$ for all $t$, and so it is fixed by every $\sigma\in H$. As every isometry of $\Chat$ has exactly two antipodal fixed points, it follows that $z_0$'s antipode is also fixed by every element of $H$. Conjugating the Riemann sphere by an isometry such that $z_0 = 0$, we conclude that $H$ is conjugate to $S^1 = \{e^{it} : t\in[0, 2\pi)\}$.

Now, take a $z\in J(R)$ that is not fixed by the action of $H$. Its orbit must be a circle, and so $J(R)$ is a collection of circles -- either a single circle or an uncountable amount of them. In the former case, $\Sigma(R)$ are the symmetries of a circle, so its elements are either of the form $z\mapsto e^{i\theta}z$ or $z\mapsto e^{i\theta}/z$ (furthermore, Eremenko and van Strien \cite{EvS11} showed that either $R$ or $R^2$ must be a Blaschke product). We show that the latter case is not possible. The smoothness of each connected component of $J(R)$ implies that all multipliers of repelling periodic orbits are real \cite{EvS11,Milnor} and, still following Emerenko and van Strien, this implies that $J(R)$ is contained in a single circle -- and therefore is a single circle. We fall back to the previous case, and we are done.
\end{proof}

Now, we need conditions that allow us to choose, among all these possible geometries, which one corresponds to a given rational map $R$. We offer a sufficient condition and a necessary one; Ye's results suggest that complete characterisations, like for polynomials, are not possible.

\begin{proposition} \label{prop:suff}
Let $R$ be a rational function and $\sigma\in\Isom(\Chat)$, and suppose that $R$ is not a Latt\`es map. If $R\sigma = \sigma^kR$ for some $k\geq 1$, then $\sigma\in\Sigma(R)$.
\end{proposition}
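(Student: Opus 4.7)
The plan is to transfer the normality of $\{R^n\}$ from a point $z\in\Chat$ to its image $\sigma(z)$, using the hypothesis as a bridge and the compactness of $\Isom(\Chat)$ (noted just after Lemma~\ref{lem:closed}) to control the growing exponents. A straightforward induction turns $R\sigma = \sigma^k R$ into $R^n\sigma = \sigma^{k^n}R^n$ for every $n\ge 1$, or equivalently $R^n = \sigma^{k^n}\circ R^n\circ\sigma^{-1}$ as self-maps of $\Chat$.

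I would then use the normal-family characterisation $F(R) = \{z : \{R^n\} \text{ is normal at } z\}$. Fix $z\in F(R)$ and a neighbourhood $U$ on which some subsequence $R^{n_j}$ converges uniformly in the chordal metric to a limit $g$. Since $\Isom(\Chat)$ is compact, I can extract a further subsequence (still denoted $n_j$) along which $\sigma^{k^{n_j}}\to\tau$ uniformly on $\Chat$ for some $\tau\in\Isom(\Chat)$. On the neighbourhood $\sigma(U)$ of $\sigma(z)$ one then has
\[
    R^{n_j} \;=\; \sigma^{k^{n_j}}\circ R^{n_j}\circ\sigma^{-1} \;\longrightarrow\; \tau\circ g\circ\sigma^{-1}
\]
uniformly; here the fact that each $\sigma^{k^{n_j}}$ is a chordal isometry is what guarantees that composing on the left does not spoil the uniform convergence of $R^{n_j}\circ\sigma^{-1}$. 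Hence $\{R^n\}$ is normal at $\sigma(z)$, so $\sigma(z)\in F(R)$ and $\sigma(F(R))\subseteq F(R)$.

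To upgrade this to $\sigma(F(R)) = F(R)$ — and thus to $\sigma(J(R)) = J(R)$, which is exactly what is claimed — I would rerun the same argument with $\sigma^{-1}$ in place of $\sigma$, after noting that rearranging the hypothesis gives $R\sigma^{-1} = (\sigma^{-1})^k R$, which has the same form. The main subtlety is the one already flagged: for $k\ge 2$ the exponents $k^n$ grow without bound, so $\{\sigma^{k^n}\}$ is not a priori convergent, and the convergent subsequence must be extracted \emph{after} the normal-family subsequence has been fixed — compactness of $\Isom(\Chat)$ is precisely what makes this step legal. The exclusion of Lattès maps does not appear essential to this argument, since Lattès maps have $J(R)=\Chat$ and the conclusion is then automatic; I expect the hypothesis is included either to rule out that trivial case or because the author has in mind a different proof more sensitive to the finer structure of $R$.
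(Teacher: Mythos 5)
Your argument is correct and follows essentially the same route as the paper: the induction $R^n\sigma = \sigma^{k^n}R^n$, combined with the fact that powers of $\sigma$ are chordal isometries, transfers normality of $\{R^n\}$ from $z$ to $\sigma(z)$, and the argument is then repeated for $\sigma^{-1}$ via $R\sigma^{-1} = (\sigma^{-1})^k R$ (the paper likewise never uses the Latt\`es exclusion). The only difference is cosmetic: the paper performs the transfer through diameter (equicontinuity) estimates, so your extraction of a convergent subsequence of $\sigma^{k^{n_j}}$ from the compactness of $\Isom(\Chat)$ is unnecessary though harmless --- just be sure to run it starting from an arbitrary sequence of iterates, not merely ``some subsequence,'' so that normality at $\sigma(z)$ is genuinely established.
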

\begin{proof}
We will show that the Fatou set of $R$ is invariant under $\sigma$. Take $z\in F(R)$. By the Arzel\`a-Ascoli theorem, for any $\epsilon > 0$ there is a neighbourhood $U$ of $z$ satisfying $\text{diam}\left[R^m(U)\right] < \epsilon$ for every $m\geq 1$. Now, we consider how $R$ behaves at $\sigma(z)$. By induction, our hypothesis implies that there exists for all $m\geq 1$ some $l\geq 1$ (which depends on $m$) such that $R^m\sigma = \sigma^lR^m$. Indeed, for the case $m = 1$, $l$ is easily given by $k$ as per our hypothesis. Now, for any $m$, $R^{m+1}\sigma = R(R^m\sigma)$, and the induction hypothesis gives us $R^{m+1}\sigma = R(\sigma^lR^m)$. By using that $R\sigma = \sigma^kR$, we can shift the $\sigma$'s ``one-by-one'' to obtain $R^{m+1}\sigma = \sigma^{kl}R^{m+1}$. Therefore, $\text{diam}\left[R^m\sigma(U)\right] = \text{diam}\left[\sigma^lR^m(U)\right]$; since $\sigma$ is an isometry of the Riemann sphere, it leaves the diameter of a set unchanged, and thus $\text{diam}\left[R^m\sigma(U)\right] = \text{diam}\left[R^m(U)\right]$ for every $m$. Since the terms on the right-hand side are limited by $\epsilon$, this implies (by the Arzel\`a-Ascoli theorem) that $R^m$ is a normal family at $\sigma(z)$, and thus $\sigma[F(R)] \subset F(R)$. Since $\sigma^{-1}$ is also an isometry, we can apply the same reasoning to conclude that $\sigma^{-1}[F(R)] \subset F(R)$, and so $F(R)$ -- and thus $J(R)$ -- is invariant under $\sigma$ and $\sigma\in\Sigma(R)$.
\end{proof}

Our necessary condition even allows us to specify a value for $k$ in Proposition \ref{prop:suff}, albeit in a very specific situation. However, we shall need some technical results concerning potentials in order to prove it. We refer to \cite{Ransford95}, \cite{Klimek} and \cite{Berteloot} for the necessary concepts.

\begin{lemma} \label{lem:dec}
Let $\Omega$ be a domain on the Riemann sphere and $z_1, z_2, \ldots, z_k$ be points in $\Omega$. Suppose $f:\Omega\to(0, \infty]$ is a function such that:
\begin{enumerate}[(i)]
    \item $f$ is harmonic on $\Omega\setminus\{z_1, \ldots, z_k\}$;
    \item As $z\to z_i$, there exists some $m_i > 0$ such that$f(z) = -m_i\log|z - z_i| + O(1)$ for every $i = 1, \ldots, k$ (we say that $f$ has a logarithmic pole of order $m_i$ at $z_i$);
    \item As $z\to\partial\Omega$, $f(z) \to 0$.
\end{enumerate}
Then, $f$ can be decomposed as
$$ f(z) = \sum_{i=1}^k m_ig_\Omega(z, z_i), $$
where $g_\Omega(z, w)$ denotes the Green's function for $\Omega$ with pole at $w$.
\end{lemma}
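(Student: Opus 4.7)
The plan is to form the difference $h(z) := f(z) - \sum_{i=1}^k m_i g_\Omega(z, z_i)$ on $\Omega\setminus\{z_1,\ldots,z_k\}$ and show it vanishes identically via the maximum principle. This is the standard recipe for recovering a function from its singular data plus boundary data in potential theory, and all three hypotheses on $f$ are tailored to match the defining properties of the Green's functions $g_\Omega(\cdot, z_i)$.

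First I would observe that $h$ is harmonic on $\Omega\setminus\{z_1,\ldots,z_k\}$, being a finite linear combination of functions each of which is harmonic there by hypothesis (i) and by the definition of the Green's function. The next step is to control $h$ near each puncture $z_i$. By hypothesis (ii), $f(z) = -m_i\log|z-z_i| + O(1)$; by the defining expansion of the Green's function, $g_\Omega(z, z_i) = -\log|z - z_i| + O(1)$, so the singular $-m_i\log|z-z_i|$ terms cancel exactly, while the contributions of $m_j g_\Omega(z, z_j)$ for $j \neq i$ are harmonic (hence bounded) near $z_i$. Consequently, $h$ is bounded in a punctured neighbourhood of each $z_i$, and the classical removable-singularity theorem for harmonic functions lets us extend $h$ to a function that is harmonic on all of $\Omega$.

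Next I would examine the boundary behaviour. By hypothesis (iii), $f(z)\to 0$ as $z\to\partial\Omega$, and by the defining property of the Green's function (assuming $\Omega$ is regular, which is implicit in hypothesis (iii) being compatible with the setup), $g_\Omega(z, z_i)\to 0$ as $z\to\partial\Omega$ for each fixed $z_i\in\Omega$. Therefore the extension of $h$ has boundary values identically zero. The maximum (and minimum) principle applied to the harmonic function $h$ on $\Omega$ then forces $h\equiv 0$, which is precisely the claimed decomposition.

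The only delicate point, and thus the main obstacle, is the justification that $g_\Omega(z, z_i) \to 0$ at $\partial\Omega$: this requires $\Omega$ to be regular in the potential-theoretic sense, so that the Green's function exists and extends continuously by zero to the boundary. In the intended application $\Omega$ will be a Fatou component (or the complement of a polar set), so this regularity is standard and one can invoke it from the references (\cite{Ransford95, Klimek}). Everything else — the cancellation of logarithmic singularities, removability, and the maximum principle — is routine.
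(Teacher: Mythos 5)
Your argument is essentially the same as the paper's: both cancel the logarithmic singularities against Green's functions, remove the resulting singularities, and use zero boundary values to conclude. The only cosmetic difference is that you subtract all $k$ Green's functions and invoke the maximum principle directly, whereas the paper subtracts $k-1$ of them and appeals to the uniqueness of the Green's function with pole at $z_1$ — which is the same maximum-principle argument in disguise.
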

\begin{proof}
Firstly, we notice that if $k = 1$ then the requirements for $f$ are exactly those for a Green's function of a domain, and the uniqueness of the Green's function gives us the desired result (albeit trivially). For $k > 1$, consider the function
$$ g(z) = \frac{1}{m_1}\left[f(z) - \sum_{i=2}^k m_ig_\Omega(z, z_i)\right]. $$
It approaches zero as $z\to\partial\Omega$; it has a simple logarithmic pole at $z_1$; and it is harmonic on $\Omega\setminus\{z_1, \ldots, z_k\}$. For the points $z_2, \ldots, z_k$, its limit as $z\to z_i$ is finite and so, by the removable singularity theorem, it can be extended as a harmonic function to $\Omega\setminus\{z_1\}$. Denoting this extension by $g$ in an abuse of notation, we see that $g$ satisfies the requirements for the Green's function of $\Omega$ with pole at $z_1$, and so
$$ f(z) - \sum_{i=2}^k m_ig_\Omega(z, z_i) = m_1g_\Omega(z, z_1). $$
\end{proof}

Next, we would like to prove that symmetries of $J(R)$ somehow respect the critical points of $R$, which are central to its dynamical behaviour. Indeed, let $C(R)$ stand for the set of critical points of $R$ together with their pre-images (we shall call it the pre-critical set):
$$ C(R) = \bigcup_{n\geq 0} R^{-n}\{z\in\Chat : \text{$z$ is a critical point of $R$}\}. $$
We shall demonstrate that any symmetry of $J(R)$ must, if $R$ satisfies adequate conditions, preserve $C(R)$.

First, it is well known that any rational function $R$ admits a unique measure of maximal entropy $\mu_R$ (this was proved independent by Lyubich \cite{Lyu83} and Freire, Lopes and Ma\~{n}\'e \cite{FLM83} in 1983). Then, Levin and Przytycki showed that, for non-exceptional rational functions -- i.e., functions whose Julia set is not smooth and whose Fatou set does not contain any parabolic domains, Siegel disks or Herman rings --, having the same measure of maximal entropy is in fact equivalent to having the same Julia set. We are going to associate a potential to this fundamental measure of rational maps, denoted its ergodic potential, and show that it is continuous and invariant under symmetries of the Julia set. Then, we shall show that its local minima coincide with the points in $C(R)$, allowing us to conclude that symmetries must preserve it.

\begin{definition}
Let $R$ be a rational function, and $\mu_R$ its unique measure of maximal entropy. We define its ergodic potential to be the function $u_R:\Chat\to\mathbb{R}$ given by
$$ u_R(z) = \int_{\Chat} \log\frac{1}{\rho(z, w)}\,d\mu_R(w), $$
where $\rho$ denotes the chordal metric on the Riemann sphere.
\end{definition}

In potential theory, this would be known as the elliptic potential associated to the measure $\mu_R$. Since the logarithm is a subharmonic function, $u_R$ is subharmonic in $\Chat\setminus J(R)$, and Okuyama \cite{Oku05} proved that it is actually continuous. He also proved that it satisfies $dd^cu_R = \omega - \mu_R$, where $\omega$ is the standard area form on the Riemann sphere and $d$ and $d^c$ are the differential operators given by $d = \partial + \overline{\partial}$ and $d^c = (i/(2\pi))(\overline{\partial} - \partial)$.

\begin{proposition} \label{prop:erg}
For a non-exceptional rational map $R$ with $\sigma\in\Sigma(R)$, $\mu_R$ and $u_R$ are invariant under $\sigma$.
\end{proposition}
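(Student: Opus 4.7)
The plan is to prove the invariance of $\mu_R$ first, and then deduce the invariance of $u_R$ as an essentially formal consequence of the fact that $\sigma$ acts as an isometry on the sphere. The invariance of $\mu_R$ should follow by exploiting the Levin--Przytycki characterisation mentioned in the preceding discussion, together with conformal conjugacy.

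More precisely, I would set $S = \sigma R \sigma^{-1}$. Since $\sigma$ is a holomorphic isometry (in particular, a M\"obius transformation), $S$ is a rational map of the same degree as $R$, conformally conjugate to $R$. In particular, if $R$ is non-exceptional then so is $S$, since the properties defining non-exceptionality (smoothness of the Julia set, absence of parabolic domains, Siegel disks and Herman rings in the Fatou set) are preserved by conformal conjugacy. Moreover, the measure of maximal entropy transforms naturally under conjugation, so $\mu_S = \sigma_*\mu_R$. Because $\sigma\in\Sigma(R)$, we have $J(S) = \sigma[J(R)] = J(R)$, and the Levin--Przytycki theorem then yields $\mu_S = \mu_R$. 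Combining, $\sigma_*\mu_R = \mu_R$.

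For the invariance of $u_R$, I would just compute directly. Using $\sigma_*\mu_R = \mu_R$ and the change-of-variables $w = \sigma(w')$,
$$ u_R(\sigma(z)) = \int_{\Chat}\log\frac{1}{\rho(\sigma(z), w)}\,d\mu_R(w) = \int_{\Chat}\log\frac{1}{\rho(\sigma(z), \sigma(w'))}\,d\mu_R(w'). $$
Since $\sigma$ is an isometry of the chordal metric, $\rho(\sigma(z), \sigma(w')) = \rho(z, w')$, and the right-hand side is exactly $u_R(z)$.

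The main subtlety is the step that invokes Levin--Przytycki: one must know that the hypothesis "non-exceptional" is preserved by M\"obius conjugacy, and that $\sigma_*\mu_R$ is indeed the measure of maximal entropy of $\sigma R\sigma^{-1}$. Both are standard but deserve to be mentioned explicitly, as they are the only place where the non-exceptionality hypothesis enters. Everything else is a one-line computation using that $\sigma$ preserves the chordal metric.
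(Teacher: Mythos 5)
Your proof is correct, and it reaches the key identity $\sigma_*\mu_R = \mu_R$ by a slightly different mechanism than the paper. The paper works with the composition $\sigma R$: it invokes Lemma \ref{lem:iff} to get $J(\sigma R) = J(R)$, applies the Levin--Przytycki theorem to conclude $\mu_{\sigma R} = \mu_R$, and then combines the push-forward invariance of $\mu_{\sigma R}$ under $\sigma R$ with $R_*\mu_R = \mu_R$ to extract $\sigma_*\mu_R = \mu_R$. You instead work with the conjugate $S = \sigma R\sigma^{-1}$, for which $J(S) = \sigma[J(R)] = J(R)$ is immediate from the definition of $\Sigma(R)$ and $\mu_S = \sigma_*\mu_R$ follows from the naturality of the maximal-entropy measure under conformal conjugacy; Levin--Przytycki then gives $\sigma_*\mu_R = \mu_S = \mu_R$ directly. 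Your route has two small advantages: it bypasses Lemma \ref{lem:iff} entirely (the most delicate link in the paper's chain of dependencies here), and the preservation of non-exceptionality is transparent for a conformal conjugate, whereas for the composition $\sigma R$ this point is left implicit in the paper. The second half of your argument --- deducing $u_R\circ\sigma = u_R$ from $\sigma_*\mu_R = \mu_R$ and the fact that $\sigma$ preserves the chordal metric --- is identical to the paper's computation.
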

\begin{proof}
As discussed in Lemma \ref{lem:iff}, $\sigma\in\Sigma(R)$ implies that $J(\sigma R) = J(\sigma)$. This means that $\mu_{\sigma R} = \mu_R$, and thus, since the measure of maximal entropy is invariant, we get
$$ \mu_R = (\sigma R)_*\mu_R = \sigma_*R_*\mu_R = \sigma_*\mu_R(w). $$
In other words, $\mu_R$ is invariant under symmetries of the Julia set. Now, the expression for $u_R\circ\sigma$ becomes
$$ u_R\circ\sigma(z) = \int_{\Chat} \log\frac{1}{\rho[\sigma(z), w]}\,d\mu_R(w) $$
and, as $\sigma$ is an isometry of the metric $\rho$,
$$ u_R(z) = \int_{\Chat} \log\frac{1}{\rho[z, \sigma^{-1}(w)]}\,d\mu_R(w) = \int_{\Chat} \log\frac{1}{\rho(z, w)}\,d(\sigma_*\mu_R)(w). $$
Since $\mu_R$ is invariant under $\sigma$, we recover the original expression for $u_R$ from the right-hand side of the equality above, and thus $(\sigma^*u_R)(z) := u_R\circ\sigma(z) = u_R(z)$.
\end{proof}

Now, since $u_R$ is continuous and $\Chat$ is compact, it attains maxima and minima. Since it is subharmonic outside of the Julia set, we conclude that its maxima are attained on $J(R)$ and its minima, in $F(R)$. The invariance  of $u_R$ under symmetries implies, in particular, that local minima of $u_R$ are mapped into other local minima; upon proving that the local minima of $u_R$ coincide with $C(R)$, we will have proved the following.

\begin{theorem} \label{thm:Cr}
For any non-exceptional rational map $R$ and $\sigma\in\Sigma(R)$, $\sigma$ preserves the pre-critical set.
\end{theorem}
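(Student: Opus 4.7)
The plan is to identify the local minima of $u_R$ with the pre-critical set $C(R)$, from which the theorem follows immediately: since $u_R$ is $\sigma$-invariant by Proposition~\ref{prop:erg}, the set of its local minima is $\sigma$-invariant, and therefore so is $C(R)$.

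To locate the local minima, I would first observe that $u_R$ is continuous on $\Chat$ and, on $F(R)$, strictly subharmonic (since $dd^c u_R = \omega > 0$ there), so that for non-exceptional $R$ the local minima must lie in $F(R)$. The key identification is to show that on each Fatou component $U$, the local minima of $u_R|_U$ coincide with $C(R) \cap U$. For this, I would combine the Brolin-Lyubich identity $R^*\mu_R = d\mu_R$ with Okuyama's equation $dd^c u_R = \omega - \mu_R$ to derive a functional equation
$$u_R \circ R - d \cdot u_R = \eta,$$
where $\eta$ is a smooth function satisfying $dd^c\eta = R^*\omega - d\omega$. Iterating yields $u_R \circ R^n - d^n u_R = \eta_n$, and since the behavior of $\eta_n$ at critical points of $R^n$ is controlled by the ramification of $R^n$, the local behavior of $u_R$ near any pre-critical point is linked to this branching data.

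To pin down the local minima on each Fatou component $U$, I would apply Lemma~\ref{lem:dec}: an auxiliary function $\phi_U$ on $U$, built from $u_R$ and the $\eta_n$'s and normalized to vanish on $\partial U$, should have logarithmic poles exactly at the points of $C(R)\cap U$, with multiplicities matching the ramification orders of $R^n$ at each such point. The decomposition of $\phi_U$ as a sum of Green's functions $g_U(\cdot,z_i)$ with $z_i\in C(R)\cap U$ would then identify the local minima of $u_R|_U$ with exactly $C(R)\cap U$.

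Finally, preservation of $C(R)\cap J(R)$ (if non-empty, as can happen in the Misiurewicz case) under $\sigma$ would follow from $\sigma$-preservation of $J(R)$ and of $C(R)\cap F(R)$, together with a continuity/density argument using the density of backward orbits of critical points in $J(R)$ for non-exceptional $R$. The main obstacle, I expect, is the correct application of Lemma~\ref{lem:dec}: one must verify that $\phi_U$ genuinely has logarithmic poles at exactly $C(R)\cap U$ (and not more general singularities nor additional points), which requires a careful analysis of the functional equation, the asymptotics of $u_R \circ R^n$ on $U$, and a separate treatment for different types of Fatou components (attracting, super-attracting, Siegel, etc.).
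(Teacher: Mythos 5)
Your overall strategy is the same as the paper's: show that the local minima of $u_R$ coincide with $C(R)$ and then invoke the $\sigma$-invariance of $u_R$ from Proposition~\ref{prop:erg}. The paper, however, carries out the identification by passing to the homogeneous lift $\widehat{R}$ on $\Cx^2$: comparing $dd^c u_R = \omega - \mu_R$ with $dd^c g = \mu_R - \omega$ gives $\pi^* u_R = \log\|\cdot\| - G_R$ up to a constant, the symmetry $\sigma$ lifts to a linear isometry $\Sigma$ of $\Cx^2$ with $\Sigma^* G_R = G_R$, and the pre-critical points are then detected as the locus $J\widehat{R} = 0$ (together with its pre-images) via the chain rule applied to the functional equation for $G_R$.

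The gap is in your proposed mechanism for the identification. The functional equation $u_R\circ R - d\,u_R = \eta$ is correct ($dd^c\eta = R^*\omega - d\omega$ is indeed smooth), but it cannot feed into Lemma~\ref{lem:dec}: that lemma requires a function that is harmonic off finitely many points, has genuine logarithmic poles at those points, and tends to zero at the boundary. By Okuyama's result $u_R$ is continuous, hence bounded, on all of $\Chat$, and each $\eta_n$ is smooth, so no finite combination of $u_R$, $u_R\circ R^n$ and the $\eta_n$ can acquire a logarithmic pole anywhere; moreover $u_R$ satisfies $dd^c u_R = \omega > 0$ throughout a Fatou component $U$, so it is \emph{nowhere} harmonic on $U$, violating hypothesis (i) of the lemma even away from $C(R)$. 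The logarithmic-pole picture you have in mind is the one from Proposition~\ref{prop:nec}, where the poles come from the B\"ottcher coordinate of a superattracting basin and sit at the pre-images of the superattracting point, not at the pre-critical points of a general Fatou component. Without an explicit construction of $\phi_U$, the central claim that the local minima of $u_R|_U$ are exactly $C(R)\cap U$ is not established. Separately, your closing density argument for $C(R)\cap J(R)$ cannot work as stated: if a critical point lies in $J(R)$, its backward orbit is dense in $J(R)$, so knowing that $\sigma$ preserves the closure of $C(R)\cap J(R)$ (which is all of $J(R)$) says nothing about preserving the set itself; note that the paper's own argument likewise only controls the pre-critical points lying in $F(R)$, which is what Proposition~\ref{prop:nec} actually uses.
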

\begin{proof}
As discussed above, what remains is to prove that the minima of $u_R$ are found in $C(R)$. For that, we shall appeal to the polynomial lift of $R$ in $\Cx^2$. Let $R$ be written in the form $R(z) = P(z)/Q(z)$, where $P$ and $Q$ are co-prime polynomials. If $d\geq 2$ is the degree of $R$, then the function
$$ \widehat{R}(z_1, z_2) = z_2^d\left(P(z_1z_2^{-1}), Q(z_1z_2^{-1})\right) $$
is a homogeneous polynomial of degree $d$ in $\Cx^2$. Furthermore, if $\pi:\Cx^2\to\Cx$ is the standard projection from $\Cx^2$ onto the Riemann sphere (given by $\pi(z_1, z_2) = z_1/z_2$), it is readily seen that $R\circ\pi = \pi\circ\widehat{R}$. Take, then, the escape rate function of $\widehat{R}$, defined as
$$ G_R(z_1, z_2) = \lim_{n\to\infty} \frac{1}{d^n}\log\|\widehat{R}^n(z_1, z_2)\|. $$
The definition implies that $G_R$ is plurisubharmonic. It is also known \cite{Berteloot} that there exists a unique function $g:\Chat\to\mathbb{R}$ such that
$$ G_R - \log\|\cdot\| = g\circ\pi. $$
Furthermore, this function $g$ satisfies $dd^cg + \omega = \mu_R$. Since $\mu_R = \omega - dd^cu_R$ as well, we conclude that $dd^c(g + u_R) = 0$ -- or, in other words, $g + u_R$ is harmonic throughout $\Chat$. Since the only harmonic functions on the Riemann sphere are constant, it follows that $u_R = -g + C$ for some real constant $C$, which we shall promptly ignore.

Thus, we have that $\log\|\cdot\| - G_R = \pi^*u_R$. By Proposition \ref{prop:erg}, $\sigma^*u_R = u_R$, so that $\log\|\cdot\| - G_R = \pi^*(\sigma^*u_R) = (\sigma\pi)^*u_R$. $\sigma$, being an isometry of $\Chat$, lifts to an isometry $\Sigma$ of $\Cx^2$, and therefore
$$ \log\|\cdot\| - G_R = \Sigma^*(\pi^*u_R) = \Sigma^*(\log\|\cdot\| - G_R) = \Sigma^*\log\|\cdot\| - \Sigma^*G_R. $$
The fact that $\Sigma$ is an isometry implies that the logarithm cancels out, so that $G_R = \Sigma^*G_R$. In particular, local extrema of $G_R$ over $F(R)$ are mapped by $\Sigma$ onto local extrema of $G_R$ over $F(R)$; the definition of $G_R$ and the chain rule imply that these are exactly the points with $J\widehat{R} = 0$ (where $J\widehat{R}$ denotes the Jacobian of $\widehat{R}$) together with their pre-images. Since points with $J\widehat{R} = 0$ correspond to lifts of critical points of $R$, we are done.
\end{proof}

We are now ready to prove the necessary condition.

\begin{proposition} \label{prop:nec}
Suppose $R$ is non-exceptional and $\sigma\in\Sigma(R)$ fixes a superattracting fixed point $z_0$ of $R$, with local degree $m > 1$. Then, $R\sigma = \sigma^mR$.
\end{proposition}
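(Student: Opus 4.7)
The plan is to reduce the identity $R\sigma = \sigma^m R$ to a local computation near $z_0$ in B\"ottcher coordinates, and then extend it to all of $\Chat$ by analyticity.

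First I would normalize. Since every element of $\Isom(\Chat)$ has exactly two antipodal fixed points, one of which is $z_0$, I conjugate $R$ and $\sigma$ by an isometry so that $z_0 = 0$ and $\sigma$ becomes $z\mapsto\lambda z$ with $|\lambda|=1$. In this chart, $R(z) = cz^m + O(z^{m+1})$ with $c\neq 0$, and B\"ottcher's theorem produces a local biholomorphism $\phi_R$ near $0$ satisfying $\phi_R\circ R = \phi_R^m$ and $\phi_R(z) = c^{1/(m-1)}z + O(z^2)$. Let $U$ denote the immediate basin of $z_0$; since $\sigma$ permutes Fatou components and fixes $z_0\in U$, it restricts to a self-homeomorphism of $U$ sending $\partial U$ to $\partial U$.

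Next I would extract a modulus identity from potential theory. Let $g_U(\cdot, z_0)$ be the Green's function of $U$ with pole at $z_0$. A standard argument identifies $g_U(z, z_0) = -\log|\phi_R(z)|$ on the domain of $\phi_R$: both functions satisfy the functional equation $f\circ R = m f$ (the right-hand side coming from the fact that $R\colon U \to U$ is proper of degree $m$ with $z_0$ as its unique full-ramification preimage of $z_0$, the left-hand side from the B\"ottcher equation), both carry a logarithmic singularity of order $1$ at $z_0$, and both vanish at $\partial U$ once $-\log|\phi_R|$ is extended via the functional equation. Now $g_U(\sigma(z), z_0)$ is harmonic on $U\setminus\{z_0\}$, has a logarithmic pole of order $1$ at $z_0$ (because $\sigma(z)\sim\lambda z$), and tends to $0$ along $\partial U$; Lemma~\ref{lem:dec} with $k=1$ then forces $g_U(\sigma(z), z_0) = g_U(z, z_0)$, i.e., $|\phi_R(\sigma(z))| = |\phi_R(z)|$ in a neighborhood of $z_0$.

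To upgrade from modulus to holomorphic data, I would observe that $\phi_R\circ\sigma/\phi_R$ is a non-vanishing holomorphic function of modulus $1$ near $z_0$, hence a unimodular constant; comparing first-order Taylor coefficients pins the constant down as $\lambda$, so $\phi_R\circ\sigma = \lambda\,\phi_R$. Substituting into the B\"ottcher equation,
$$ \phi_R(R\sigma(z)) = \phi_R(\sigma(z))^m = \lambda^m \phi_R(z)^m = \lambda^m \phi_R(R(z)) = \phi_R(\sigma^m R(z)), $$
and local injectivity of $\phi_R$ yields $R\sigma = \sigma^m R$ in a neighborhood of $z_0$; since both sides are rational on $\Chat$, the identity propagates to all of $\Chat$. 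I expect the main obstacle to be the bootstrap step from the modulus identity $|\phi_R\circ\sigma| = |\phi_R|$ to the full intertwining $\phi_R\circ\sigma = \lambda\phi_R$, which depends on the simple-zero structure of $\phi_R$ at $z_0$ to fix the phase and on the identification of the Green's function with $-\log|\phi_R|$; the non-exceptional hypothesis plays only a supporting role, guaranteeing that $\partial U\subset J(R)$ is non-polar so that $g_U$ exists and Lemma~\ref{lem:dec} applies.
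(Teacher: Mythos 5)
Your overall architecture matches the paper's---pass to B\"ottcher coordinates at $z_0$, show that $\sigma$ becomes a rotation $w\mapsto\lambda w$ there, and read off $R\sigma=\sigma^mR$ from $(\lambda w)^m=\lambda^m w^m$---but there is a genuine gap in the potential-theoretic step that is supposed to deliver $|\phi_R\circ\sigma|=|\phi_R|$. You identify $-\log|\phi_R|$ (extended to the immediate basin $U$ by the functional equation) with the Green's function $g_U(\cdot,z_0)$, on the grounds that both satisfy $f\circ R=mf$, the Green's function because $R\colon U\to U$ is ``proper of degree $m$ with $z_0$ as its unique full-ramification preimage of $z_0$.'' That is false in general: the immediate basin of a superattracting fixed point of local degree $m$ may contain further preimages of $z_0$, in which case $\deg(R|_U)>m$ and $g_U(R(z),z_0)=\sum_{w\in R^{-1}(z_0)\cap U}m_w\,g_U(z,w)\neq m\,g_U(z,z_0)$. (Already for the degree-$3$ Blaschke product $z\mapsto z^2(z-a)/(1-\bar az)$ with $0<|a|<1$, the origin has local degree $2$ while the disk---its immediate basin---contains the extra preimage $a$, and the restriction has degree $3$.) Consequently $-\log|\phi_R|$ extended to $U$ is not $g_U(\cdot,z_0)$ but a sum $\sum_i m_ig_U(\cdot,z_i)$ over \emph{all} preimages of $z_0$ in $U$, and your (correct) observation that $g_U(\sigma(z),z_0)=g_U(z,z_0)$ does not yield $|\phi_R\circ\sigma|=|\phi_R|$: the discrepancy is exactly $\sum_{i\geq 1}m_i\bigl(g_U(\sigma(z),z_i)-g_U(z,z_i)\bigr)$, and killing it requires knowing that $\sigma$ permutes the poles $z_i$ together with their multiplicities.

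That missing ingredient is precisely what the paper supplies: Lemma~\ref{lem:dec} is applied with $k>1$ to decompose the potential over all preimages of $z_0$ in the component, and Theorem~\ref{thm:Cr}---which is where non-exceptionality is genuinely used, via the $\sigma$-invariance of the measure of maximal entropy and hence of the pre-critical set---guarantees that $\sigma$ permutes those preimages. Your closing remark that non-exceptionality ``plays only a supporting role'' is a symptom of the gap. The remainder of your argument---$\sigma(U)=U$, the conformal invariance of the Green's function, the bootstrap from the modulus identity to $\phi_R\circ\sigma=\lambda\phi_R$ via the open mapping theorem, and the extension from a neighbourhood of $z_0$ to all of $\Chat$ by the identity principle---is sound, and that last point is one the paper leaves implicit.
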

\begin{proof}
Consider the function
$$ f(z) = -\lim_{n\to\infty} \frac{1}{m^n} \log |\Phi[R^m(z)]|, $$
where $\Phi:U\to B(0; r)$ is a biholomorphism conjugating $R$ to $z\mapsto z^m$. It is well-defined throughout the immediate basin of attraction for $z_0$, denoted $\mathcal{A}(z_0)$, with poles at pre-images of $z_0$ with order given by the multiplicity of the pre-image. By B\"ottcher's theorem, $R$ sends level curves of $f$ onto level curves of $f$ -- in fact, $f[R(z)] = mf(z)$. Furthermore, we can apply Lemma \ref{lem:dec} to $f$ and write
$$ f(z) = \sum_{i=0}^k m_ig_{F_0}(z, z_i), $$
where $F_0$ is the connected component of $F(R)$ containing $z_0$, and we have enumerated the pre-images of $z_0$  in $F_0$ as $z_0, z_1, \ldots, z_k$ ($z_0$ is a pre-image of itself). Now, we have:
$$ f[\sigma(z)] = \sum_{i=0}^k m_ig_{F_0}[\sigma(z), z_i] = \sum_{j=0}^k m_jg_{F_0}[\sigma(z), \sigma(z_j)], $$
where in the last inequality we have used Theorem \ref{thm:Cr} to ensure that $\sigma$ permutes the $z_i$, and permuted the indices accordingly. Next, our hypothesis that $\sigma$ is a symmetry of $J(R)$ fixing $z_0$ implies that $\sigma$ is a conformal mapping of $F_0$ onto itself, and -- since Green's functions are preserved by conformal mappings -- we conclude that $g_{F_0}[\sigma(z), \sigma(z_i)] = g_{F_0}(z, z_i)$ for every $i$. Therefore, $f\sigma(z) = f(z)$, which means that there is a neighbourhood $V$ of $z_0$ that is forward-invariant under both $R$ and $\sigma$ and which is contained in $U$ -- one need only define $V$ as any level curve of $f$ that is completely contained in $U$. Again by B\"ottcher's theorem, $\Phi$ maps $V$ into a circle $B(0; \delta)$ and conjugates $R$ to $z^m$.

Now, consider the functions $\widehat{\sigma} = \Phi\sigma\Phi^{-1}:B(0; \delta)\to B(0; \delta)$ and $\widehat{R} = \Phi R\Phi^{-1}:B(0; \delta)\to B(0; \delta)$. We already know that $\widehat{R}(z) = z^m$; now, notice that $\widehat{\sigma}$ is an automorphism of $B(0, \delta)$, and thus it is an isometry of the hyperbolic metric on $B(0; \delta)$. Since it also fixes $0$, it follows that it is of the form $\widehat{\sigma}(z) = e^{i\theta}z$. Therefore,
$$ \widehat{R}\widehat{\sigma}(z) = (e^{i\theta}z)^m = e^{i(m\theta)}z^m = \widehat{\sigma}^m\widehat{R}(z), $$
and so $R\sigma = \sigma^mR$.
\end{proof}

In Figure \ref{fig:syms}, we show examples of Julia sets with finite, non-trivial symmetry groups. Figure \ref{subfig:a} is the Julia set for the Newton map of the polynomial $z\mapsto z^3 + 1$, and Figure \ref{subfig:b} corresponds to the map $z\mapsto z^2 + 1/z^2$. The latter belongs to a family which shall be discussed in further details in section \ref{sec:app}.

\begin{figure}[!h]
    \centering
    \begin{subfigure}[b]{0.4\textwidth}
        \includegraphics[width=\textwidth]{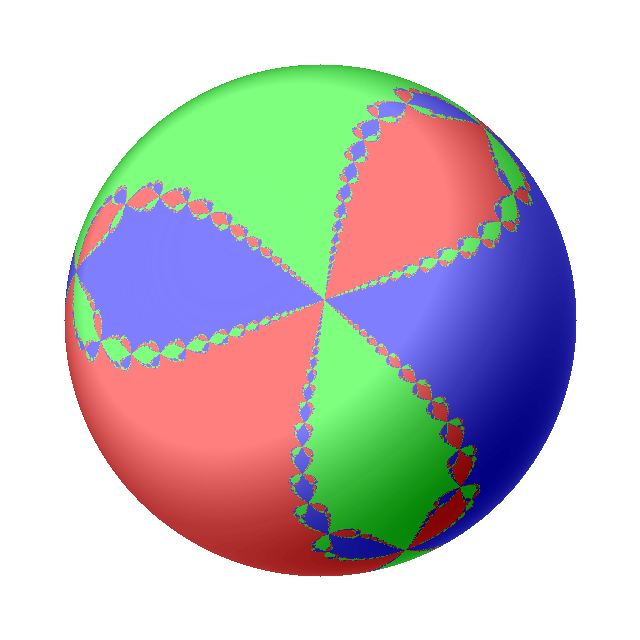}
        \caption{Phase portrait for the Newton map of $z\mapsto z^3 + 1$. Each colour represents the attraction basin of minus a cube root of unity.}
        \label{subfig:a}
    \end{subfigure}
    \quad
    \begin{subfigure}[b]{0.45\textwidth}
        \includegraphics[width=\textwidth]{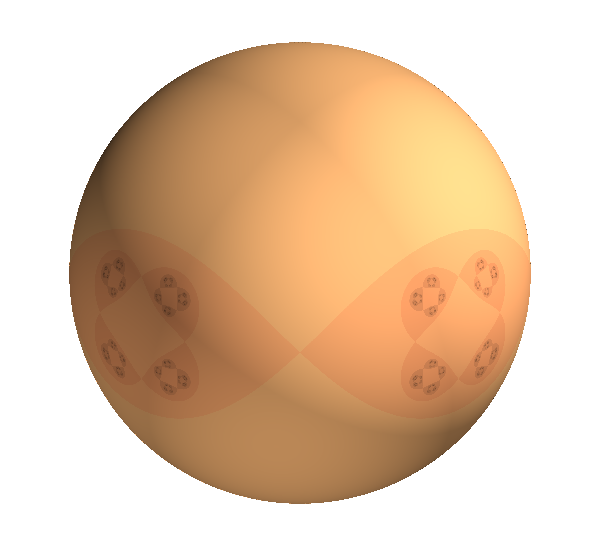}
        \caption{Phase portrait for the rational map $z\mapsto z^2 + 1/z^2$. Lighter points belong to the basin of attraction of infinity, and black points are in the Julia set.}
        \label{subfig:b}
    \end{subfigure}
    \caption{Examples for rational maps with finite symmetry groups.}
    \label{fig:syms}
\end{figure}

Finally, a known application of symmetries of a Julia set is in the description of all polynomials that share a Julia set \cite{SS95}. Although Ye's results provide an example where the simple criterion for polynomials fails for rational maps, we can nevertheless offer a sufficient condition involving symmetries. First, however, we shall need this technical lemma.

\begin{lemma} \label{lem:iff}
Let $R$ and $S$ be rational maps of degree $\geq 2$. Then, $J(R) = J(S) \Leftrightarrow J(R)$ is completely invariant under $S$ and $J(S)$ is completely invariant under $R$. In particular, if $\sigma\in\Sigma(R)$, then $J(R) = J(\sigma R)$.
\end{lemma}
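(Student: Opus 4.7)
The plan is to establish the biconditional first and then apply it to $R$ and $S=\sigma R$ to obtain the ``in particular'' statement.

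The forward direction is immediate: any Julia set is completely invariant under its own map, so $J(R)=J(S)$ is automatically completely invariant under both $R$ and $S$. For the converse I would invoke the classical characterization of $J(f)$, for $f$ rational of degree at least two, as the smallest closed subset of $\widehat{\mathbb{C}}$ which is completely $f$-invariant and contains at least three points (see Beardon, \emph{Iteration of Rational Functions}, Theorem~4.2.4). Under the hypothesis, $J(R)$ is closed, completely $S$-invariant, and, since $\deg R\geq 2$, contains uncountably many points, so minimality gives $J(S)\subseteq J(R)$; exchanging the roles of $R$ and $S$ yields the reverse inclusion.

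For the ``in particular,'' I would take $S=\sigma R$, which has degree $\deg R\geq 2$ since $\sigma$ is Möbius, and verify the two conditions of the biconditional. The first---complete $(\sigma R)$-invariance of $J(R)$---follows from $\sigma^{\pm 1}\in\Sigma(R)$ via direct computation:
\[
(\sigma R)(J(R))=\sigma(R(J(R)))=\sigma(J(R))=J(R),
\]
\[
(\sigma R)^{-1}(J(R))=R^{-1}(\sigma^{-1}(J(R)))=R^{-1}(J(R))=J(R).
\]

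The second condition---complete $R$-invariance of $J(\sigma R)$---is the main obstacle. Writing $R=\sigma^{-1}(\sigma R)$ and invoking complete $(\sigma R)$-invariance of $J(\sigma R)$ gives $R(J(\sigma R))=\sigma^{-1}(J(\sigma R))$, so the task reduces to showing $\sigma\in\Sigma(\sigma R)$. I would tackle this in three steps. First, the first condition together with minimality already yields $J(\sigma R)\subseteq J(R)$, and the same computation as above shows $J(R)$ is completely $(R\sigma)$-invariant, giving $J(R\sigma)\subseteq J(R)$ by minimality. Second, the conjugacy $\sigma^{-1}(\sigma R)\sigma=R\sigma$ identifies $\sigma^{-1}(J(\sigma R))$ with $J(R\sigma)$, so $\sigma\in\Sigma(\sigma R)$ is equivalent to $J(\sigma R)=J(R\sigma)$ inside $J(R)$. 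Third, I would attempt this equality by combining the minimality characterization with a direct dynamical comparison of $\sigma R$ and $R\sigma$ on $J(R)$, exploiting the identities $R(J(\sigma R))=J(R\sigma)$ and $\sigma(J(R\sigma))=J(\sigma R)$ together with the group structure of $\Sigma(R)$. This last step is where I expect the real technical content to lie; the difficulty is that neither of these two closed subsets of $J(R)$ is \emph{a priori} invariant under the dynamics of the other map, so one must use the interaction of $R$ with the full symmetry group rather than with $\sigma$ alone.
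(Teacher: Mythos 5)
Your handling of the biconditional (minimality of the Julia set as the smallest closed, completely invariant set with more than three points) and your verification that $J(R)$ is completely invariant under $\sigma R$ both coincide with what the paper does. The genuine gap is exactly where you flag it: the complete $R$-invariance of $J(\sigma R)$ --- which, in your reformulation, becomes $\sigma\in\Sigma(\sigma R)$, i.e.\ $J(\sigma R)=J(R\sigma)$ --- is never actually established. Your ``third step'' restates the difficulty rather than resolving it, and since minimality only delivers the one inclusion $J(\sigma R)\subseteq J(R)$, the ``in particular'' clause remains unproved. Note also that your reduction is not obviously a simplification: deciding whether $J(\sigma R)=J(R\sigma)$ is essentially another instance of the original problem of when two rational maps share a Julia set, so chasing it through the conjugacy risks going in a circle.

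The paper closes this gap with a normality argument by contradiction rather than by comparing $J(\sigma R)$ with $J(R\sigma)$. Suppose some $z\in J(\sigma R)$ had a preimage $w\in R^{-1}(z)$ lying in $F(\sigma R)$. Since $J(\sigma R)\subseteq J(R)$ and $J(R)$ is completely invariant under $R$, one still has $w\in J(R)$. Equicontinuity of $\{(\sigma R)^k\}$ at $w$ gives, for any $\epsilon>0$, a neighbourhood $U$ of $z$ with $\mathrm{diam}\bigl[(\sigma R)^kR^{-1}(U)\bigr]<\epsilon$ for all $k$; cancelling one factor, $(\sigma R)^kR^{-1}=(\sigma R)^{k-1}\sigma$ on $U$, and because $\sigma$ is a spherical isometry the diameters of these images are governed by the $R$-factors alone. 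Since $U$ is a neighbourhood of a point of $J(R)$ and $J(R)$ is invariant under both $R$ and $\sigma$, the images always remain neighbourhoods of points of $J(R)$, and the expansion (blow-up) property of $R$ on its Julia set forces their diameters to exceed $\epsilon$, a contradiction. Some dynamical input of this kind --- normality plus expansion on the Julia set --- is the missing ingredient; the algebraic manipulations with $\sigma$, $R\sigma$ and $\sigma R$ alone will not produce it.
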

\begin{proof}
If $J(R) = J(S)$, the complete invariance of the Julia set follows immediately from its definition. To prove the converse, we recall that $J(S)$ is characterised as the minimal closed set with more than three points which is completely invariant under $S$, hence $J(R)\subset J(S)$. By the symmetry of the hypothesis, we also get that $J(S)\subset J(R)$, and so they are equal.

Now, consider $\sigma\in\Sigma(R)$. In order to conclude that $J(R) = J(\sigma R)$, we shall prove that $J(R)$ is invariant under $\sigma R$, and vice-versa. Firstly, since $\sigma$ is a symmetry of $J(R)$, we have by definition that $\sigma[J(R)] = J(R)$ and so it is clear that $\sigma R[J(R)] = J(R)$. Also, by the minimality of the Julia set, this implies that $J(\sigma R) \subset J(R)$.

All that is left is to prove that $R[J(\sigma R)] = R^{-1}[J(\sigma R)] = J(\sigma R)$, and we shall do it by contradiction. Suppose, then, that $J(\sigma R)$ is not backward invariant under $R$ (since $R$ is surjective, this is actually equivalent to assuming that $J(\sigma R)$ is not completely invariant). Thus, we can take $z\in J(\sigma R)$ such that $R^{-1}(z)$ contains at least one point -- which, by an abuse of notation, we shall denote by $R^{-1}(z)$ -- that is not in $J(\sigma R)$. In other words, $R^{-1}(z)\in F(\sigma R)$. However, we already know that $J(\sigma R)$ is a subset of $J(R)$, which is completely invariant under $R$, and therefore $R^{-1}(z)\in J(R)$. This will be the basis for obtaining a contradiction.

Since $R^{-1}(z)$ is in the Fatou set of $\sigma R$, it follows from the Arzel\'a-Ascoli theorem that $\{(\sigma R)^k\}_{k\geq 1}$ is equicontinuous there. Thus, for any $\epsilon > 0$, we can take a neighbourhood $U$ of $z$ such that
$$ \text{diam}\left[(\sigma R)^kR^{-1}(U)\right] < \epsilon\quad \text{for every } k\geq 1. $$
By taking a term from the family of $(\sigma R)^k$, this becomes
$$ \text{diam}\left[(\sigma R)^{k-1}\sigma(U)\right] < \epsilon,\quad k\geq 1. $$
Next, we consider what the sequence of mappings $(\sigma R)^{k-1}\sigma$ means for the diameter. $\sigma$ is an isometry of the Riemann sphere; therefore, none of the $\sigma$ terms in this expression have any effect on diameter. This means that the end result of $\text{diam}\left[(\sigma R)^{k-1}\sigma(U)\right]$ is ultimately determined by the iterations of $R$. Also, since $z\in J(R)$ and $J(R)$ is completely invariant under both $\sigma$ and $R$, this means that $U$ is always mapped to a neighbourhood of a point in $J(R)$. Since $R$ eventually expands all neighbourhoods of points in its Julia set, we can conclude that $\text{diam}\left[(\sigma R)^{k-1}\sigma(U)\right]$ should eventually grow larger than any small value of $\epsilon$, and so we have reached a contradiction.
\end{proof}

We are now in a position to prove the following.

\begin{proposition} \label{prop:sufJ}
Let $R$ and $S$ be rational maps of degree $\geq 2$ such that $SR = \sigma RS$ for some $\sigma\in\Sigma(R)$. Then, $R$ and $S$ have the same Julia set.
\end{proposition}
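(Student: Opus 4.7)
The plan is to apply Lemma \ref{lem:iff}, which reduces the desired equality $J(R)=J(S)$ to showing that $J(R)$ is completely invariant under $S$ and that $J(S)$ is completely invariant under $R$. The key starting observation is that iterating $SR=\sigma RS$ inductively gives $SR^n=T^nS$ for every $n\geq 1$, where $T=\sigma R$; and the final clause of Lemma \ref{lem:iff} already tells us that $J(T)=J(R)$. Thus $S$ realises a rational semi-conjugacy from $R$ to a map that shares its Julia set.

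The $S$-invariance of $J(R)$ should follow from two normality-transfer arguments. If $z\in F(R)$, then $\{R^n\}$ is normal on a neighbourhood $U$ of $z$, so post-composing with the continuous $S$ makes $\{SR^n\}=\{T^nS\}$ normal on $U$; because $S$ is a non-constant rational map it is open, and one checks that uniform convergence of a subsequence $T^{n_k}S$ on $U$ descends to uniform convergence of $T^{n_k}$ on the open set $S(U)$, giving $S(z)\in F(T)=F(R)$. For the forward inclusion, if $z\in J(R)$ then the Montel-style blow-up gives $\bigcup_nR^n(U)\supseteq\Chat\setminus E$ with $|E|\leq 2$ for every neighbourhood $U$ of $z$; applying the surjective $S$, the union $\bigcup_nT^n(S(U))=\bigcup_nSR^n(U)$ omits only finitely many points of $\Chat$, and since $J(T)$ is infinite and completely $T$-invariant we must have $J(T)\cap S(U)\neq\emptyset$ for every such $U$. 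Continuity of $S$ then forces $S(z)\in J(T)=J(R)$. Together these give complete $S$-invariance of $J(R)$, and the minimal-closed-completely-invariant characterisation of the Julia set yields $J(S)\subseteq J(R)$.

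The reverse inclusion $J(R)\subseteq J(S)$ is the main obstacle because the hypothesis is asymmetric in $R$ and $S$. My intended route is to rearrange $SR=\sigma RS$ into $RS=\sigma^{-1}SR$, which iterates by the same induction to $RS^n=(\sigma^{-1}S)^nR$, and then to rerun the normality-transfer argument with the roles of $R$ and $S$ exchanged, so as to conclude that $J(S)$ is completely invariant under $R$. The delicate step is that this argument needs $J(\sigma^{-1}S)=J(S)$, equivalently $\sigma^{-1}\in\Sigma(S)$; the plan to obtain this is to combine the already-proved inclusion $J(S)\subseteq J(R)$ with the fact that $\sigma^{-1}\in\Sigma(R)$, and then to examine how the isometry $\sigma^{-1}$ interacts with the $S$-dynamics inside $J(R)$---for instance via the invariance of the measure of maximal entropy (in the spirit of Proposition \ref{prop:erg}) transported through the semi-conjugacy $SR^n=T^nS$. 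Once $\sigma^{-1}\in\Sigma(S)$ is in hand, the symmetric normality-transfer goes through, $J(S)$ becomes completely invariant under $R$, and a final application of Lemma \ref{lem:iff} closes the proof.
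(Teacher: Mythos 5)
Your first half is correct and is, at bottom, the same argument as the paper's: both iterate the relation to $SR^n=(\sigma R)^nS$, use Lemma \ref{lem:iff} to identify $J(\sigma R)$ with $J(R)$, and transfer normality across the semi-conjugacy. The paper does the transfer with diameters and a Lipschitz constant for $S$, you do it with subsequences plus a Montel blow-up at Julia points; both are fine and both yield complete invariance of $J(R)$ under $S$ and hence $J(S)\subseteq J(R)$.

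The genuine gap is exactly where you flag it, and you have not closed it. To rerun the argument on $RS^n=(\sigma^{-1}S)^nR$ you must pass from ``$R(z)\in F(\sigma^{-1}S)$'' to ``$R(z)\in F(S)$'', i.e.\ you need $\sigma^{-1}\in\Sigma(S)$, and nothing in the hypotheses supplies this: you only know $\sigma^{-1}\in\Sigma(R)$, and at that stage $J(S)$ is merely a subset of $J(R)$, which a symmetry of $J(R)$ has no reason to preserve. Your proposed repair via Proposition \ref{prop:erg} is circular as stated: that proposition lets a symmetry of $J(S)$ act on $\mu_S$, so invoking it presupposes the very membership $\sigma^{-1}\in\Sigma(S)$ you are after, while transporting $\mu_R$ through $SR^n=(\sigma R)^nS$ only returns information about $\mu_R$ and $J(R)$; note also that $\mu_S=\mu_R$ is, by Levin--Przytycki, essentially equivalent to the conclusion $J(S)=J(R)$ itself. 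So the second half of your proposal is a plan, not a proof. For what it is worth, the paper's own proof makes the same leap silently (``we can apply the same argument as above''), so you have put your finger on a real weak point of the argument --- but identifying the missing step is not the same as supplying it, and as written your proof is incomplete.
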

\begin{proof}
We shall prove that, under the hypotheses of the theorem, $F(R)$ is completely invariant under $S$ and vice-versa. Since both are surjective on $\Chat$, it suffices to prove backward invariance, i.e., $S^{-1}[F(R)\subset F(R)\subset S^{-1}[F(R)]$.

Firstly, notice that, for all $k\geq 1$, we obtain by induction -- the argument is analogous to the one used in Proposition \ref{prop:suff} -- that $SR^k = (\sigma R)^kS$. Now, let $M$ be a Lipschitz constant for $S$ in the spherical metric. For $z\in F(R)$, the definition of the Fatou set means that $\{R^k\}_{k\geq 1}$ is normal, and therefore equicontinuous by the Arzel\'a-Ascoli theorem, at $z$. As such, for any $\epsilon$, there exists a neighbourhood $U$ of $z$ such that $\mathrm{diam}\left[R^k(U)\right] <\epsilon/M$ for every $k\geq 1$. Since $\sigma$ is an isometry of the Riemann sphere, we have:
\begin{equation*}
    \mathrm{diam}\left[(\sigma R)^kS(U)\right] = \mathrm{diam}\left[SR^k(U)\right] \leq M\mathrm{diam}\left[R^k(U)\right] < \epsilon.
\end{equation*}
This tells us that $\{(\sigma R)^k\}_{k\geq 1}$ is equicontinuous on $S(U)$, and thus $S(z)\in F(\sigma R) = F(R)$. Therefore, $S[F(R)]\subset F(R)$, and $F(R)\subset S^{-1}[F(R)]$.

Now, let $V = S^{-1}$ for $U\subset F(R)$. Since $F(R) = F(\sigma R)$, we can pick $U$ such that $\mathrm{diam}[(\sigma R)^k(U)] < \epsilon$ for every $k\geq 1$ for an arbitrary choice of $\epsilon > 0$. Then,
\begin{equation*}
\begin{split}
    \mathrm{diam}\left[SR^k(V)\right] =& \mathrm{diam}\left\{SR^k\left[S^{-1}(U)\right]\right\}\\
    =& \mathrm{diam}\left\{(\sigma R)^kS\left[S^{-1}(U)\right]\right\}\\
    =& \mathrm{diam}\left[(\sigma R)^k(U)\right] < \epsilon,
\end{split}
\end{equation*}
and so $\{SR^k(V)\}_{k\geq 1}$ is equicontinuous on $V$. Since $S$ is Lipschitz continuous, so is $\{R^k\}_{k\geq 1}$, and by the Arzel\'a-Ascoli theorem we have that $\{R^k\}_{k\geq 1}$ is normal on $S^{-1}[F(R)]$. It follows that $S^{-1}[F(R)]\subset F(R)$, and so we can conclude that $F(R)$ is backward invariant under $S$. Finally, this implies that $F(R)$ -- and thus $J(R)$ -- is completely invariant under $S$.

For the remaining statement -- the complete invariance of $F(S)$ under $R$ --, we recall that $\Sigma(R)$ is a group; hence, $\sigma^{-1}$ is also a symmetry of the Julia set, and it satisfies $RS = \sigma^{-1}SR$. We can apply the same argument as above, concluding that $F(S)$ is completely invariant under $R$, and by Lemma $\ref{lem:iff}$ we obtain $J(R) = J(S)$.
\end{proof}

\section{Applications} \label{sec:app}
We apply these results to the families of singularly perturbed maps, also called McMullen maps. These are rational functions of the form
\begin{equation*}
    R_\lambda(z) = z^m + \frac{\lambda}{z^d}
\end{equation*}
for $m\geq 2$, $d\geq 1$ and $\lambda\in\Cx$. They have been previously studied by McMullen \cite{McMullen88}, Devaney and others \cite{DLU05}, who have already exploited particular M\"obius functions that preserve $J(R_\lambda)$. We provide a way to determine all isometries of $\Chat$ that do so.

\begin{theorem}
The Julia set of $R_\lambda$ has the following symmetries:
\begin{enumerate}[(i)]
    \item $z\mapsto e^{i\theta}z^{\pm 1}$ for any $\theta\in\mathbb{R}$, if $\lambda = 0$;
    \item $z\mapsto\mu z^{\pm 1}$, where $\mu^{m+d} = 1$, if $m = d$ and $|\lambda| = 1$;
    \item $z\mapsto\mu z$, where $\mu^{m+d} = 1$, otherwise.
\end{enumerate}
\end{theorem}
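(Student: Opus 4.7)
My plan is to handle case (i) directly and, for cases (ii) and (iii), to proceed in two directions: exhibiting the claimed maps as symmetries via Proposition~\ref{prop:suff}, and excluding all others via Proposition~\ref{prop:nec} and Theorem~\ref{thm:Cr}.

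Case (i) is immediate: $R_0(z) = z^m$ has $J(R_0) = S^1$, and the holomorphic isometries of $\Chat$ preserving $S^1$ are exactly the rotations $z \mapsto e^{i\theta}z$ and inversions $z \mapsto e^{i\theta}/z$. For $\lambda \neq 0$, I first verify the claimed symmetries. For rotations $\sigma(z) = \mu z$ with $\mu^{m+d} = 1$, using $\mu^{-d} = \mu^m$ gives
\[ R_\lambda(\mu z) = \mu^m z^m + \mu^{-d}\lambda/z^d = \mu^m R_\lambda(z) = \sigma^m R_\lambda(z), \]
so $R_\lambda\sigma = \sigma^m R_\lambda$ and Proposition~\ref{prop:suff} puts $\sigma$ in $\Sigma(R_\lambda)$ (since $R_\lambda$ is non-Latt\`es for $\lambda \neq 0$). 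For case (ii), with $m = d$ and $|\lambda| = 1$, pick $\mu_0$ with $\mu_0^m = \lambda$; then $R_\lambda(\mu_0/z) = \mu_0^m/z^m + \lambda z^m/\mu_0^m = R_\lambda(z)$, so $\sigma_0(z) = \mu_0/z$ satisfies $R_\lambda\sigma_0 = \sigma_0^2 R_\lambda$ and is a symmetry. Composing with the rotations generates the dihedral group of order $2(m+d)$.

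For the reverse direction, take $\sigma \in \Sigma(R_\lambda)$ and distinguish whether it fixes the unique superattracting fixed point $\infty$ (of local degree $m$). If $\sigma(\infty) = \infty$, then $\sigma$ is a holomorphic isometry of $\Chat$ fixing $\infty$, hence $\sigma(z) = \mu z$ with $|\mu| = 1$; Proposition~\ref{prop:nec} gives $R_\lambda\sigma = \sigma^m R_\lambda$, forcing $\mu^{m+d} = 1$ by the previous coefficient match. If $\sigma(\infty) \neq \infty$, I use Theorem~\ref{thm:Cr}: $\sigma$ permutes the pre-critical set $C(R_\lambda)$, and combined with the fact that $R_\lambda^{-1}(\infty) = \{0, \infty\}$ is precisely the set of critical preimages of the fixed critical point $\infty$, one concludes that $\sigma$ must swap $\{0, \infty\}$. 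Hence $\sigma(z) = \mu/z$ with $|\mu|=1$, and coefficient analysis of $R_\lambda(\mu/z) = \mu^m/z^m + \lambda z^d/\mu^d$—demanding this take the form $\tau R_\lambda(z)$ for some $\tau \in \Sigma(R_\lambda)$—forces $m = d$, $|\lambda|=1$, and the stated constraint on $\mu$.

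The main obstacle is justifying that any $\sigma$ not fixing $\infty$ must in fact swap $\{0, \infty\}$: Theorem~\ref{thm:Cr} alone only places $\sigma(\infty)$ somewhere in the countable pre-critical set, so pinning it down requires exploiting the $(m+d)$-fold polar-axis symmetry already present in $\Sigma(R_\lambda)$ together with the isometric action on the free critical polygon $\{z : z^{m+d} = d\lambda/m\}$ at latitude $|z| = (d|\lambda|/m)^{1/(m+d)}$. The same geometric analysis, combined with the rotational order constraint $\mu^{m+d} = 1$ coming from Proposition~\ref{prop:nec}, should rule out the exotic platonic symmetry groups from Theorem~\ref{thm:struct} that could otherwise arise when $m + d \leq 5$.
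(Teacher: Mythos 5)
Your treatment of case (i) and of the rotation subgroup is essentially the paper's: the same computation $R_\lambda(\mu z)=\mu^m R_\lambda(z)$ feeds Proposition~\ref{prop:suff} for sufficiency, and Proposition~\ref{prop:nec} applied at the superattracting fixed point $\infty$ forces $\mu^{m+d}=1$ for necessity. Your explicit verification that $z\mapsto\mu_0/z$ with $\mu_0^m=\lambda$ is a symmetry when $m=d$ and $|\lambda|=1$ (via $R_\lambda\sigma_0=R_\lambda=\sigma_0^2R_\lambda$) is correct and is actually a point the paper leaves implicit.

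The genuine gap is in the step you yourself flag: showing that a symmetry $\sigma$ with $\sigma(\infty)\neq\infty$ must swap $0$ and $\infty$. Theorem~\ref{thm:Cr} only tells you $\sigma(\infty)$ lies in the countably infinite pre-critical set, and your proposed repair --- an unspecified geometric argument on the ``free critical polygon'' that ``should'' exclude the platonic groups --- is not an argument. The paper closes exactly this hole by a different mechanism: it invokes the classification of Theorem~\ref{thm:struct}. Once the stabiliser of $\infty$ in $\Sigma(R_\lambda)$ is known to be the cyclic group of rotations of order $m+d$ about the polar axis, the list of admissible closed subgroups of $\mathrm{SO}(3)$ forces any strictly larger $\Sigma(R_\lambda)$ to be the corresponding dihedral group, whose extra elements are precisely the equatorial half-turns $z\mapsto\mu/z$; no appeal to the critical points is needed at this stage. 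A second, smaller problem: your necessity analysis for an inversion rests on ``demanding $R_\lambda(\mu/z)$ take the form $\tau R_\lambda(z)$ for some $\tau\in\Sigma(R_\lambda)$,'' but nothing guarantees that $R_\lambda\sigma$ factors through $R_\lambda$ in this way. The paper instead compares the Green's functions of the Fatou components containing $0$ and $\infty$ (using that $\sigma$ maps one conformally onto the other and that $R_\lambda$ maps a neighbourhood of $0$ onto a neighbourhood of $\infty$ with degree $d$), and extracts $m=d$ and $|\lambda|=1$ from the B\"ottcher expansion; you would need this, or some substitute, to make the coefficient comparison legitimate.
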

\begin{proof}
The case $\lambda = 0$ reduces to $R_0(z) = z^m$, and its Julia set is a circle; hence, (i) follows from Theorem \ref{thm:struct}.

Now, if $\lambda \neq 0$, any symmetry must either fix infinity or map it to another point. We start with the symmetries fixing infinity. These are, of course, a subgroup of $\Sigma(R_\lambda)$ made of symmetries of the form $z\mapsto e^{i\theta}z$ for some values of $\theta\in\mathbb{R}$. Our task here is to ascertain the possible values of $\theta$.
Let $\sigma(z) = \mu z$, where $\mu^{m+d} = 1$. We shall prove that $\sigma\in\Sigma(R_\lambda)$. We have:
\begin{equation*}
    R_\lambda\sigma(z) = \mu^mz^m + \frac{\lambda}{\mu^dz^d} = \mu^mz^m + \mu^m\frac{\lambda}{z^d} = \mu^mR_\lambda(z) = \sigma^mR_\lambda(z),
\end{equation*}
and so, by Proposition \ref{prop:suff}, $\sigma$ is a symmetry of $J(R_\lambda)$. On the other hand, any symmetry fixing infinity must, by Proposition $\ref{prop:nec}$, satisfy $R_\lambda\sigma = \sigma^mR_\lambda$. If $\sigma(z) = \nu z$ with $|\nu| = 1$, then
\begin{equation*}
    R_\lambda\sigma(z) = \nu^mz^m + \nu^{-d}\frac{\lambda}{z^d} = \nu^mz^m + \nu^m\frac{\lambda}{z^d} = \sigma^mR_\lambda(z),
\end{equation*}
and thus $\nu^{ -d} = \nu^m$ and $\nu^{m+d} = 1$. This means that the symmetries fixing infinity are a subgroup of $\Sigma(R_\lambda)$ isomorphic to $\mathbb{Z}/(m+d)\mathbb{Z}$.

Now, we consider any remaining symmetries. First, we invoke Theorem \ref{thm:struct}: since $\Sigma(R_\lambda)$ only admits certain structures, any symmetry group that properly contains $\{z\mapsto \nu z : \nu^{m+d} = 1\}$ as a subgroup must also contain a symmetry of the form $\sigma(z) = \mu/z$ with $\mu^{m+d} = 1$. In other words, the only possible structures for $\Sigma(R_\lambda)$ are $\{z\mapsto \mu z : \mu^{m+d} = 1\}$ or $\{z\mapsto \mu z : \mu^{m+d}=1\}\cup\{z\mapsto \mu/z : \mu^{m+d}=1\}$. What is left, thus, is to decide when it is one or the other.

Thus, consider that $\sigma$ has the form $\sigma(z) = \mu/z$. We shall again call upon results from potential theory. Let $g_0(z, 0)$ and $g_\infty(z, \infty)$ denote the Green's functions for the connected components of $F(R_\lambda)$ containing $0$ and $\infty$, respectively. Since conformal mappings send Green's functions to Green's functions, we have that $g_\infty[\sigma(z), \infty] = g_0(z, 0)$ for $z$ in a neighbourhood of $0$. At the same time, $R_\lambda$ maps this neighbourhood of $0$ to a neighbourhood of infinity with multiplicity $d$, and thus $g_\infty[R_\lambda(z), \infty] = dg_0(z, 0)$ by the uniqueness of the Green's function of a domain. Therefore, $g_\infty[R_\lambda(z), \infty] = dg_\infty[\sigma(z), \infty]$ and $d\log|\Phi\sigma(z)| = \log|\Phi R_\lambda(z)|$, where $\Phi$ is the B\"ottcher function for $R_\lambda$; hence, by applying the series expansion for $\Phi$,
\begin{equation*}
    \left[\frac{\mu}{z} + a_0 + \cdots\right]^d = \alpha\left[z^m + \frac{\lambda}{z^d} + \cdots\right],
\end{equation*}
where $|\alpha| = 1$. Comparing the coefficients in the series expansion, we conclude that $m = d$ and, simultaneously, $\mu^d = \alpha\lambda$. Since $|\mu| = |\alpha| = 1$, it follows that $|\lambda| = 1$ and we are done.
\end{proof}

Figures \ref{fig:ppl22} and \ref{fig:ppl21} illustrate this theorem. If $m = d = 2$, the isometries $z\mapsto \mu z$, where $\mu^4 = 1$, are always a symmetry of $J(R_\lambda)$ (see Figures \ref{fig:js22l01} and \ref{fig:js22l10}). If, in addition, $\lambda = 0$ (red dot) or $|\lambda| = 1$ (blue circle), then additional symmetries arise: in the former case, $J(R_0)$ is the unit circle, and so has all rotations as its symmetries as well as inversions with respect to the unit circle. In the latter, composing a rotation by a fourth root of unity with an inversion also yields a symmetry of $J(R_\lambda)$ (see Figure \ref{fig:js22l1}). If, on the other hand, $m = 2$ but $d = 1$, the region $|\lambda| = 1$ has nothing special with regards to symmetry. For any $\lambda\in\Cx^*$, the symmetry group consists of rotations by $\pi/3$ and $2\pi/3$ radians as in Figures \ref{fig:js21l01}, \ref{fig:js21l1} and \ref{fig:js21l10}. For $\lambda = 0$ (red dot) the Julia set is again the unit circle.

The figures show the connectedness locus -- i.e., the values of $\lambda$ for which $J(R_\lambda)$ is connected -- in order to emphasise one thing: the structure of the symmetry group has no regards for any topological changes to $J(R_\lambda)$. Indeed, while the structure of the Julia set changes drastically on the boundaries of the black region, the symmetry group ``ignores'' these changes and instead undergoes change as $|\lambda| = 1$ for $m = d = 2$, as in Figure \ref{fig:ppl22}.

\begin{figure}[H]
    \centering
        \begin{subfigure}[!t]{\textwidth}
        \centering
        \includegraphics[width=0.9\textwidth]{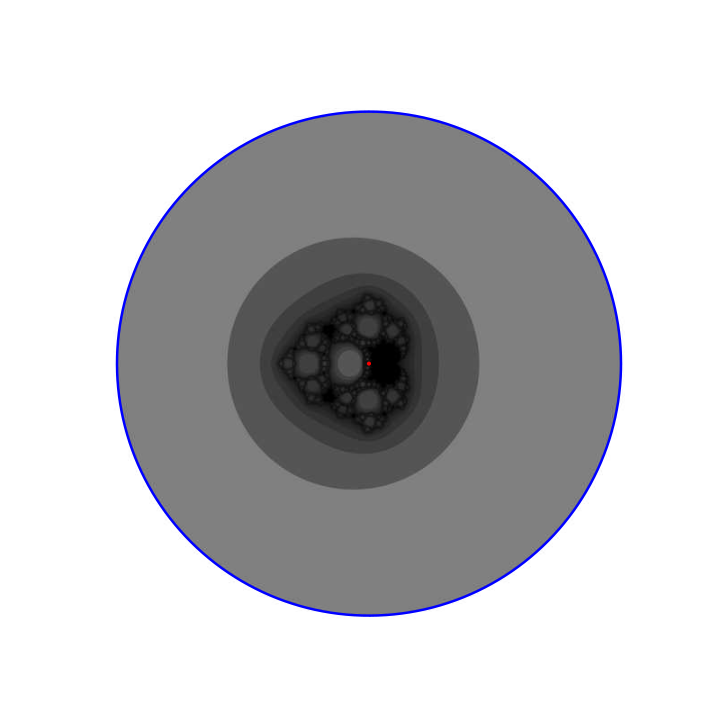}
        \caption{Parameter plane for the family $z\mapsto z^2 + \lambda/z^2$. We highlight the regions where $\Sigma(R_\lambda)$ is different.}
        \label{fig:ppl22}
    \end{subfigure}

    \begin{subfigure}[!b]{0.45\textwidth}
        \centering
        \includegraphics[width=0.45\textwidth]{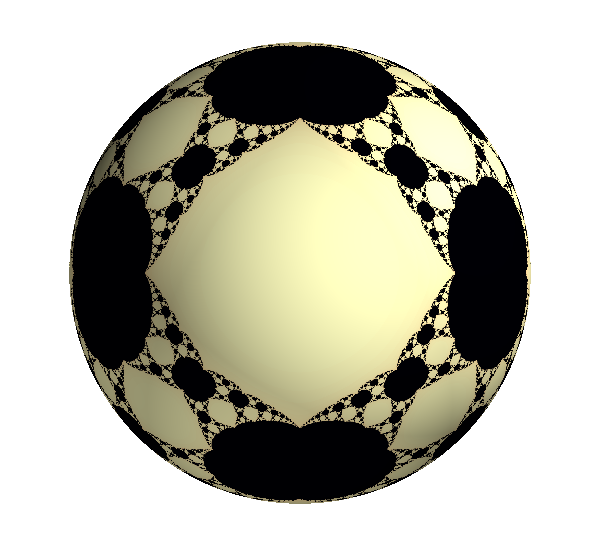}
        \caption{$\lambda = 0.1$}
        \label{fig:js22l01}
    \end{subfigure}
    \begin{subfigure}[!b]{0.45\textwidth}
        \centering
        \includegraphics[width=0.45\textwidth]{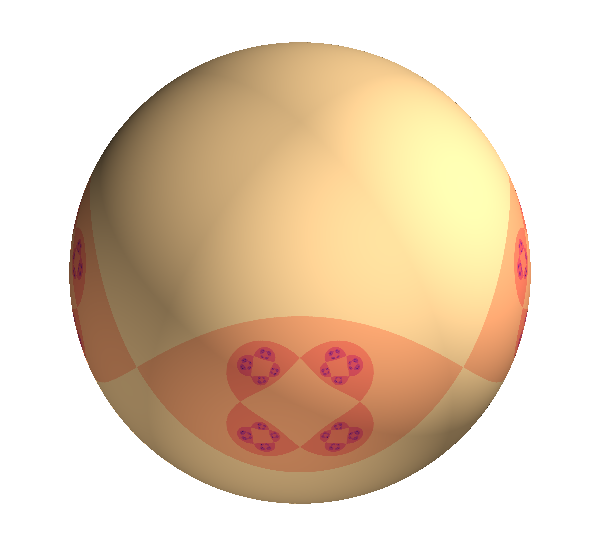}
        \caption{$\lambda = 1$}
        \label{fig:js22l1}
    \end{subfigure}
    \begin{subfigure}[!b]{0.45\textwidth}
        \centering
        \includegraphics[width=0.45\textwidth]{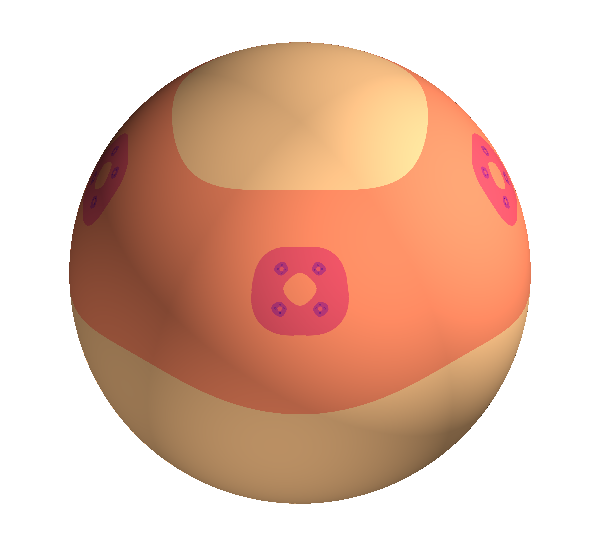}
        \caption{$\lambda = 10$}
        \label{fig:js22l10}
    \end{subfigure}
\end{figure}

\begin{figure}[H]
    \centering
    \begin{subfigure}[!t]{\textwidth}
        \centering
        \includegraphics[width=0.9\textwidth]{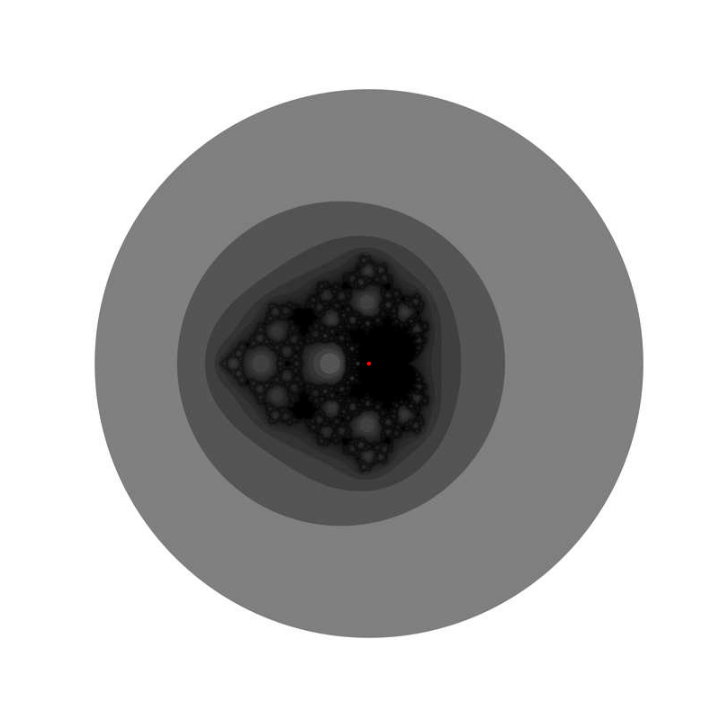}
        \caption{Parameter plane for the family $z\mapsto z^2 + \lambda/z$. We highlight the regions where $\Sigma(R_\lambda)$ is different.}
        \label{fig:ppl21}
    \end{subfigure}

    \begin{subfigure}[!b]{0.45\textwidth}
        \centering
        \includegraphics[width=0.45\textwidth]{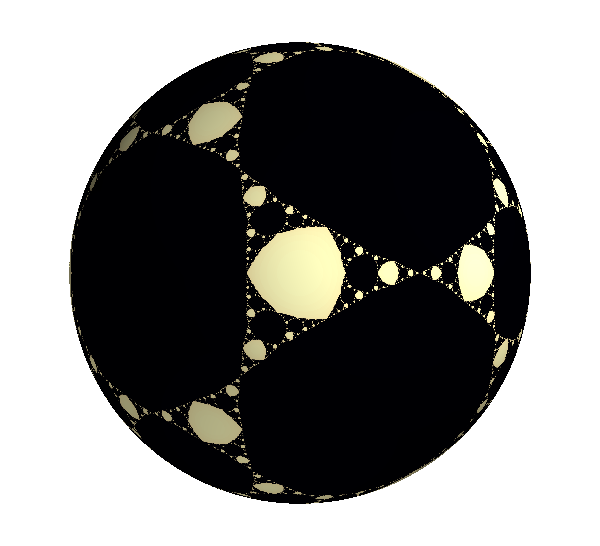}
        \caption{$\lambda = 0.1$}
        \label{fig:js21l01}
    \end{subfigure}
    \begin{subfigure}[!b]{0.45\textwidth}
        \centering
        \includegraphics[width=0.45\textwidth]{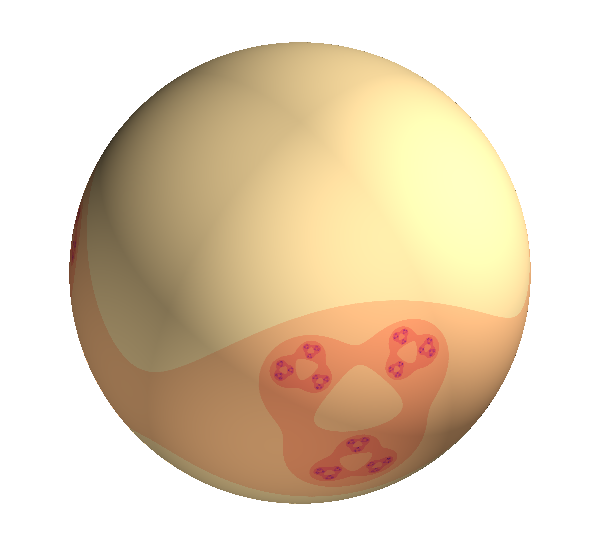}
        \caption{$\lambda = 1$}
        \label{fig:js21l1}
    \end{subfigure}
    \begin{subfigure}[!b]{0.45\textwidth}
        \centering
        \includegraphics[width=0.45\textwidth]{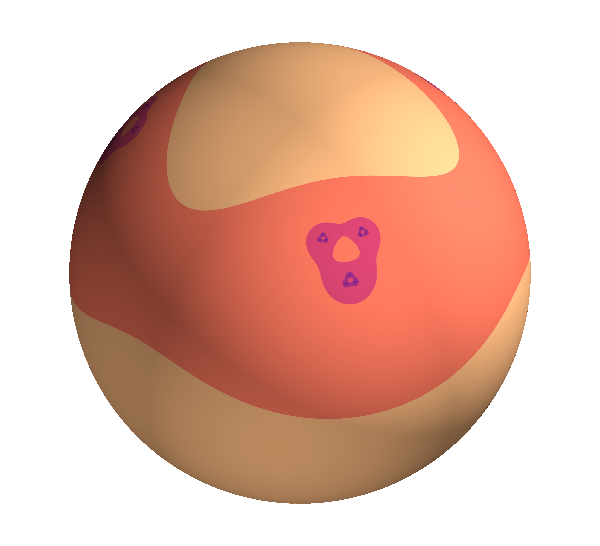}
        \caption{$\lambda = 10$}
        \label{fig:js21l10}
    \end{subfigure}
\end{figure}

\section*{Acknowledgements}
I would like to thank Mitsu Shishikura, Sebastian van Strien, Fedor Pakovich, Sylvain Bonnot and Laura DeMarco for their comments. Last but not least, my heartfelt thanks to Luna Lomonaco, my adviser. I acknowledge financial support from CNPq grant no. 158128/2017-6.


\end{document}